\begin{document}

\title*{Conditions of exact null controllability and the problem of complete stabilizability for time-delay systems}
\titlerunning{Exact controllability and complete stabilizability for time-delay systems} 
\author{Pavel Barkhayev, Rabah Rabah and Grigory Sklyar}
\institute{Pavel Barkhayev \at Norwegian University of Science and Technology, Høgskoleringen 1, 7491, Trondheim, Norway 
	and B.Verkin Institute for Low Temperature Physics and Engineering,
	Academy of Sciences of Ukraine, 47 Nauki Ave., 61103 Kharkiv, Ukraine, \email{pavloba@ntnu.no}
\and Rabah Rabah \at IMT Atlantique, Mines-Nantes, 4 rue Alfred Kastler, BP 20722 Nantes, France, \email{rabah3.14159@gmail.com}
\and Grigory Sklyar \at Institute of Mathematics, University of Szczecin, Wielkopolska 15, 70-451, Szczecin, Poland, \email{sklar@univ.szczecin.pl}}
%
%
\maketitle

\abstract*{For a class of linear time-delay control systems satisfying the property of completability of the generalized eigenvectors we prove that the problems of complete stabilizability and exact null controllability are equivalent.}

\abstract{For a class of linear time-delay control systems satisfying the property of completability of the generalized eigenvectors we prove that the problems of complete stabilizability and exact null controllability are equivalent.}

\section{Introduction}\label{sec:intro}\label{sec:1}

The problems of controllability and stabilizability are one of the central and most investigated in the mathematical control theory; these problems are well-studied in many cases.
The relation between the problems as well as these notions themselves depend essentially on specific settings.
For example, in finite-dimensional settings most of the main controllability notions (exact, null, approximate) are equivalent and imply stabilizability. Moreover, complete stabilizability (stabilizability with an arbitrary decay rate) is equivalent to controllability.
In infinite-dimensional settings the situation is much more sophisticated.
First of all, different notions of controllability are not equivalent in general. Secondly, 
exact null controllability implies complete  stabilizability but the inverse does not hold in general.  
In some special cases, if ${\rm e}^{\mathcal{A} t}$ is a group (\cite{Zabczyk_1992}) 
or the operators ${\rm e}^{\mathcal{A} t}$ are surjective for all $t\ge 0$ (\cite{Rabah_Karrakchou_1997,Zeng_Yi_Xie_2013}), complete stabilizability implies exact controllability. 
Finally, exact null controllability implies complete stabilizability with bounded feedback (see \cite{Rabah_Sklyar_Barkhayev_2017}  for the proof and counterexamples).

The problem of exponential stabilizability for linear time-delay systems was considered by many authors, see e.g. \cite{Hale_Verduyn_1993,Pandolfi_1976,Oconnor_Tarn_1983,Richard_2003,Michiels_Niculescu_2007} and references therein.
For the problem of asymptotic non-exponential  stabilizability, which appears only for neutral type systems, 
we refer to \cite{Rabah_Sklyar_Rezounenko_2008,Rabah_Sklyar_Barkhayev_2012}.
An analysis of relations between exact controllability and exponential stabilizability  may be found e.g. in \cite{Salamon_1984b,Ito_Tarn_1985,Oconnor_Tarn_1983,Dusser_Rabah_2001}.

A rather general class of linear control time-delay systems with distributed delays is described by equation
\begin{equation}\label{eq:intro_01} 
\dot{z}(t) - A_{-1} \dot{z}(t-1) = L z_t(\cdot) + b u(t), \quad t\ge 0,
\end{equation}
here $z(t) \in \mathbb{R}^n$ is state, and  we use the notation 
$z_t(\theta)=z(t+\theta)$, $\theta\in [-1,0]$, $t\ge 0$; $u(t) \in \mathbb{R}$ is control, and the matrices $A_{-1}$, $A_{2}$, $A_{3}$ and $b$ are of appropriate dimensions; the elements of $A_{2}$ and  $A_{3}$ take values in $L^2(-1,0)$; 
the operator $L$, acting from the Sobolev space $W^{1,2}((-1,0); \mathbb{R}^n)$ to $\mathbb{R}^n$  is bounded, i.e.
$$
Lz_t(\cdot)= \int^0_{-1}\left [A_2(\theta)\dot z(t+\theta) + A_3(\theta) z(t+\theta)\right ]{\mathrm d}\theta.
$$

For the system (\ref{eq:intro_01}) of neutral type the problem of exact controllability is deeply investigated 
in \cite{Rabah_Sklyar_2007,Rabah_Sklyar_Barkhayev_2016}.
It was shown that controllability set coincides with the domain of the operator corresponding to the system (\ref{eq:intro_01}) and exact null controllability is equivalent to the rank conditions
\begin{eqnarray}
\mathrm{rank}\begin{pmatrix}
\Delta_{\mathcal A}(\lambda); & b
\end{pmatrix} = n \; \mbox{ for all } \; \lambda \in \mathbb{C}, \label{eq:spec_contr} \\
\mathrm{rank}\begin{pmatrix}
-\mu I + A_{-1}; & b
\end{pmatrix} = n  \; \mbox{ for all } \; \mu \in \mathbb{C}, \label{eq:A_1_contr}
\end{eqnarray}
where $\Delta_{\mathcal A}(\lambda)$ is the characteristic matrix of the system~(\ref{eq:intro_01}):
\begin{equation}\label{eq:Delta}
	\Delta_{\mathcal A}(\lambda)= -\lambda I
	+ \lambda e^{-\lambda}A_{-1} + 
	\int^0_{-1}\left [\lambda e^{\lambda s} A_2(s)  +e^{\lambda
		s} A_3(s)\right]{\mathrm d}s.
\end{equation}
We note that the condition~(\ref{eq:A_1_contr}) assures the existence of a feedback change of the form $u(t)=v(t)+p_{-1}\dot{z}(t-1)$ such that the matrix $A_{-1}+bp_{-1}$ is non-singular. This means that the operator $\mathcal{A}$ is a generator of a $C_0$-group and thus exact null controllability is equivalent to exact controllability.

Later in \cite[Theorem 6]{Rabah_Sklyar_Barkhayev_2017} it was shown that complete stabilizability is equivalent to the condition~(\ref{eq:spec_contr}) together with
\begin{equation}\label{eq:A_1_contr_sing}
\mathrm{rank}\begin{pmatrix}
-\mu I + A_{-1}; & b
\end{pmatrix} = n  \; \mbox{ for all } \; \mu \in \mathbb{C}\backslash\{0\}.
\end{equation}
This means, in particular, that exact controllability implies complete stabilizability for the systems~(\ref{eq:intro_01}) and if the matrix $A_{-1}$ is non-singular or if $\mathrm{rank}\begin{pmatrix} A_{-1}; & b \end{pmatrix} = n$
then exact controllability is equivalent to complete stabilizability.
However, in the case $\mathrm{rank}\begin{pmatrix} A_{-1}; & b \end{pmatrix} < n$
the situation is unclear. 
In \cite{Rabah_Sklyar_Barkhayev_2017} we posed a conjecture that complete stabilizability is equivalent to exact null controllability in the general case.

In the special case of retarded systems ($A_{-1}=0$) complete stabilizability is equivalent to~(\ref{eq:spec_contr}) which is called spectral controllability condition.
It is well-known that exact null controllability implies spectral controllability (see e.g. \cite{Salamon_1984}), however the inverse have not been proved for general systems.
One of the first results on this issue was obtained in \cite{Jacobs_Langenhop_1976} by Jacobs and Langenhop, where the conjecture is proved in case $n=2$ and $L f = A_1 f(-h) + A_0 f(0)$, $A_i\in\mathbb{R}^{2\times 2}$.
Later, in \cite{Marchenko_1979} Marchenko claimed the conjecture for control systems with finitely many delays, 
however, in \cite{Salamon_1984} it is  noticed that his arguments seem to be incomplete.
In 1984 Colonius \cite{Colonius_1984} has showed the conjecture in the case  $L f = A_1 f(-h) + A_0 f(0)$ and arbitrary $n$. His proof is based on the fact that spectrum controllability is equivalent to solvability of finite spectrum assignment problem.
Later Olbrot and Pandolfi \cite{Olbrot_Pandolfi_1988} have given an explicit algebraic algorithm of computing a control function 
which steers any given initial function to zero in finite time for quite wide class of retarded control systems.

In the present paper we show that the property of completeness (completability) of the set of generalized eigenvectors is crucial for equivalence between exact controllability and complete stabilizability.
This property allows to represent the steering conditions of controllability as a vector moment problem and investigate its solvability in an appropriate class of functions. 
If the system (\ref{eq:intro_01}) is of neutral type then the family of exponential corresponding to the moment problem form the Riesz basis of it closure what gives a powerful tool of investigation (\cite{Avdonin_Ivanov_1995}).
This method was used in  \cite{Rabah_Sklyar_2007,Rabah_Sklyar_Barkhayev_2016}
where an exhaustive analysis of controllability was given.
In case of systems of retarded or mixed type the system of exponentials does not possess the Riesz basis property. 
However, as it was shown in \cite{Khartovskii_Pavlovskaya_2013} for the case of systems with point-wise delays some similar conditions of controllability may be obtained. 
We use the moment problem approach and show that some trigonometric moment problems are solvable on intervals of appropriate length.
To show solvability we construct explicitly systems biorthogonal to exponentials (\cite{Avdonin_Ivanov_1995,Levin_1996}).
For a class of completable systems this method allows to prove the equivalence of exact null controllability and complete stabilizability and, thus, to show the conjecture posed in \cite{Rabah_Sklyar_Barkhayev_2017}.

The paper is organized as follows. In Sect.~\ref{sec:pre} we introduce some notations and definitions, and rewrite equivalently the system (\ref{eq:intro_01}) in the infinite-dimensional model space.
In Sect.~\ref{sec:mp} we introduce the moment problem corresponding to the controllability conditions. We give the conditions of completeness and completability of the generalized eigenvectors.
In Sect.~\ref{sec:res} we prove the main result on controllability which implies equivalence of complete stabilizability and exact null controllability for system satisfying the completability condition. Besides, we give examples.

\section{Preliminaries}\label{sec:pre}

Let us consider an initial condition of the form
\begin{equation}\label{eq:intro_02}
\left\{
\begin{array}{l}
z(0)=y,\\
z(t)=z_0(t), \quad t\in [-1,0).
\end{array}
\right.
\end{equation}
In the case $A_{-1}=0$, for any $y\in\mathbb{R}^n$, $z_0(t)\in L^2((-1,0); \mathbb{R}^n)$
and any control $u(t)\in L^2_{\rm loc}[0,+\infty)$ there exists a unique solution $z(t)$, $t\ge 0$ of the initial-value problem~(\ref{eq:intro_01}),(\ref{eq:intro_02}), which is continuous (see  \cite{Delfour_1980}).
In the case of neutral type systems ($A_{-1}\not=0$) the existence of the strong solution is guaranteed for smooth initial states only: $z_0(t)\in W^{1,2}((-1,0); \mathbb{R}^n)$  (see \cite{Burns_Herdman_Stech_1983}, where a neutral operator of more general form was considered).

These facts naturally leads to consideration the problem (\ref{eq:intro_01}),(\ref{eq:intro_02}) in the product space 
$$
M_2(-1,0;\mathbb{R}^n) \stackrel{\mathrm{def}}{=} \mathbb{R}^n\times L^2(-1,0;\mathbb{R}^n),
$$
further noted shortly as $M_2$.
Thus, the initial-value problem~(\ref{eq:intro_01})-(\ref{eq:intro_02}) may be rewritten as
\begin{equation}\label{eq:intro_03}
\left\{
\begin{array}{l}
\dot{x}(t) = \mathcal{A} x(t) + \mathcal{B} u(t), \; t\ge 0,\\
x(0) = x_0,
\end{array}
\right.
\end{equation}
here 
$$
{\mathcal A} x(t)=
\begin{pmatrix}
Lz_t(\cdot) \cr
\frac{{\mathrm d} z_t(\theta)}{{\mathrm d}\theta}
\end{pmatrix}, \quad
x(t)= \begin{pmatrix}
y(t)\cr  z_t(\cdot)
\end{pmatrix},
$$
with 
$$
\mathcal{D}(\mathcal{A}) =  
\left\{ 
\left(
\begin{array}{c}
y\\
z_0(\tau)
\end{array}
\right): \; z_0(\tau)\in W^{1,2}((-1,0); \mathbb{R}^n), y = z_0(0) - A_{-1}z_0(-1)
\right\},
$$
and $\mathcal{B} u(t) = \left(
\begin{array}{c}
b u(t)\\
0
\end{array}
\right) \in M_2$,
$x_0 = \left(
\begin{array}{c}
y\\
z_0(\tau)
\end{array}
\right)$.

\begin{definition}\label{def:stabilizab}
	Time-delay system (\ref{eq:intro_01}) (or the infinite-dimensional system (\ref{eq:intro_03})) is said to be exponentially stabilizable if there is a linear feedback operator ${\mathcal F}$: 
$$
	u(t)= {\mathcal F} x(t) = F_{-1}\dot z(t-1) + \int^0_{-1}\left [F_2(\theta)\dot z(t+\theta) + F_3(\theta) z(t+\theta)\right ]{\mathrm d}\theta,
$$
	such that the semigroup 
	${\mathrm e}^{({\mathcal A} +{\mathcal B} {\mathcal F})t}$ is exponentially stable, i.e. there is a $\omega > 0$ such that
	\begin{equation}    \label{eq:exp-stab}
	\left \Vert {\mathrm e}^{({\mathcal A} +{\mathcal B} {\mathcal F})t}\right\Vert \le M_{\omega}{\mathrm e}^{-\omega t}, \quad M_{\omega} \ge 1.
	\end{equation}
	The system is said to be  completely stabilizable (or stabilizable with an arbitrary decay rate) if for all $\omega >0$ there is 
	a linear bounded feedback ${\mathcal F}_\omega$ such that (\ref{eq:exp-stab}) holds.
\end{definition}

\begin{definition}
	An initial state $x_0 =(y, z_0(\tau)) \in M_2$ is said to be null controllable by means of the system~(\ref{eq:intro_01}) at time $T$ 
	if there exists a control $u(t)\in L^2[0,T]$ such that $z(t)=z(t, y, z_0, u(t))\equiv 0$ for $t\in [T-1, T]$.
\end{definition}

Since the evolution of the Cauchy problem~(\ref{eq:intro_03}) is described by 
$$
x(t)={\rm e}^{\mathcal{A}t}x_0 +\int_0^t {\rm e}^{\mathcal{A}(t-\tau)}\mathcal{B} u(\tau){\rm d}\tau,
$$
then the null controllable states satisfy the relation
\begin{equation}\label{eq:pre_01}
{\rm e}^{\mathcal{A}T}x_0 = - \int_0^T {\rm e}^{\mathcal{A}\tau}\mathcal{B}u(T-\tau)\: {\rm d}\tau,
\end{equation}
what naturally leads to the notion of attainable set at time $T$:
\begin{equation}\label{eq:pre_02}
\mathcal{R}_T = 
\left\{
\int_0^T {\rm e}^{\mathcal{A}\tau} \mathcal{B}  u(\tau)\: {\rm d}\tau: \; u(t)\in L^2[0,T]
\right\} \subset M_2.
\end{equation}

\begin{definition}
	System~(\ref{eq:intro_01}) is said to be exactly null controllable from $F\subset M_2$ at time $T$ if for any $x_0\in$:
	${\rm e}^{\mathcal{A}T} F \subset \mathcal{R}_T$.
\end{definition}

\begin{definition}
	System~(\ref{eq:intro_01}) is said to be exactly null controllable at time $T$ if it is controllable from $M_2$, i.e.
	${\rm Im} ({\rm e}^{\mathcal{A}T}) \subset \mathcal{R}_T$.
\end{definition}

Not every system of the form~(\ref{eq:intro_01}) may be exactly null controllable.
If e.g. $\det A_{-1}\not=0$ then $F=\mathcal{D}(\mathcal{A})$ is the maximal possible set of null controllability
(see \cite{Rabah_Sklyar_2007,Rabah_Sklyar_Barkhayev_2016} for more details).
However, retarded systems ($A_{-1}=0$) can be exactly null controllable from $M_2$.


\section{The moment problem and completability property}\label{sec:mp}

The eigenvalues $\lambda\in\sigma(\mathcal{A})$ are zeros of the characteristic function $\det\Delta_\mathcal{A}(\lambda)$, where $\Delta_\mathcal{A}(\lambda)$ is given by (\ref{eq:Delta}).
Let $\lambda\in\sigma(\mathcal{A})$ then the corresponding eigenvector $\varphi_\lambda$ of the operator $\mathcal{A}$ is 
of the form
$$
\varphi_\lambda=
\left(
\begin{array}{c}
(I-{\rm e}^{\lambda}A_{-1})x_\lambda\\ 
{\rm e}^{\lambda \tau}x_\lambda
\end{array}
\right),\quad x_\lambda\in \mathrm{Ker}\Delta(\lambda),
$$
and the eigenvector $\psi_\lambda$ of the operator $\mathcal{A}^*$ corresponding to $\overline{\lambda}$ is of the form
\begin{equation}\label{eq:psi}
\psi_\lambda=
\left(
\begin{array}{c}
y_\lambda\\ 
\left[\overline{\lambda} {\rm e}^{-\overline{\lambda} \tau}I -A_2^*(\tau) 
+ \int_0^\tau {\rm e}^{\overline{\lambda} (s-\tau)} \left(A_3^*(s)
+ \overline{\lambda} A_2^*(s)\right)\:{\rm d}s \right]y_\lambda
\end{array}
\right),
\end{equation}
where $y_\lambda\in \mathrm{Ker}\Delta^*(\lambda)$.

In (\ref{eq:pre_01}) we make the change of function $v(\tau)=-u(T-\tau)$ and multiply the relation by $\psi_\lambda$:
$$
\left\langle {\rm e}^{\mathcal{A}T}x_0, \psi_\lambda \right\rangle_{M_2}
= \int_0^T \left\langle {\rm e}^{\mathcal{A}\tau}\mathcal{B} v(\tau), \psi_\lambda \right\rangle_{M_2} {\rm d}\tau,
$$
what gives 
$$
\left\langle x_0, {\rm e}^{\overline{\lambda}T} \psi_\lambda \right\rangle_{M_2}
= \int_0^T \left\langle \mathcal{B} v(\tau), {\rm e}^{\overline{\lambda}\tau}\psi_\lambda \right\rangle_{M_2} {\rm d}\tau,
$$
or 
$$
{\rm e}^{\lambda T} \left\langle x_0, \psi_\lambda \right\rangle_{M_2}
= \left\langle b, y_\lambda \right\rangle_{\mathbb{R}^n} \int_0^T {\rm e}^{\lambda\tau} v(\tau) {\rm d}\tau.
$$

If the spectral controllability condition~(\ref{eq:spec_contr}) holds then $\left\langle b, y_\lambda \right\rangle_{\mathbb{R}^n}\not=0$,
and we obtain the moment problem
\begin{equation}\label{eq_m_07}
s_\lambda  =  \int_0^T {\rm e}^{\lambda\tau} v(\tau) {\rm d}\tau, 
\quad \lambda\in \sigma(\mathcal{A}),
\end{equation}
where
\begin{equation}\label{eq_m_07aaa}
s_\lambda \equiv {\rm e}^{\lambda T} \left\langle x_0, \psi_\lambda \right\rangle_{M_2} 
(\left\langle b, y_\lambda \right\rangle_{\mathbb{R}^n})^{-1}.
\end{equation}

\begin{remark}
If $u(t)\in L^2(0,T)$ steers $x_0\in M_2$ to zero at time $T$ then $v(t)$ solves the moment problem~(\ref{eq_m_07}).
The inverse assertions holds only if the system $\{\psi_\lambda\}_{\lambda\in \sigma(\mathcal{A})}$ is complete in $M_2$.
\end{remark}
 Indeed, the sequence $\{s_\lambda\}$ defines uniquely the initial point $x_0$ by~(\ref{eq_m_07aaa}),
since if $s_\lambda = {\rm e}^{\lambda T} \left\langle x_0, \psi_\lambda \right\rangle_{M_2} 
(\left\langle b, y_\lambda \right\rangle_{\mathbb{R}^n})^{-1} = {\rm e}^{\lambda T} \left\langle x_1, \psi_\lambda \right\rangle_{M_2} 
(\left\langle b, y_\lambda \right\rangle_{\mathbb{R}^n})^{-1}$ then $\left\langle x_0 - x_1, \psi_\lambda \right\rangle_{M_2}=0$,
$\lambda\in \sigma(\mathcal{A})$,  what gives $x_1=x_0$.
Further, if $x_0\not\in R_T$ it would mean that 
$h\equiv {\rm e}^{\mathcal{A}T}x_0 - \int_0^T {\rm e}^{\mathcal{A}\tau}\mathcal{B} u(T-\tau) {\rm d}\tau \not = 0$.
However, $\left\langle h, \psi_\lambda \right\rangle_{M_2}=0$ due to~(\ref{eq_m_07}), thus $h=0$.

Further we give the conditions of completeness.
Let $m_\lambda$ be the length of the maximal chain of the generalized eigenvectors corresponding to $\lambda\in\sigma(\mathcal{A})$.
The generalized eigenspace corresponding to $\lambda$ is
$$
V_\lambda(\mathcal{A}) = \mathrm{Ker} (\mathcal{A} - \lambda I)^{m_\lambda},
$$
and the linear span 
$$
V(\mathcal{A}) = {\rm Lin}\{V_\lambda(\mathcal{A}):\; \lambda\in\sigma(\mathcal{A})\}
$$
is called the generalized eigenspace of $\mathcal{A}$.

The entire function $f(\lambda)=\det\Delta(\lambda)$ is of exponential type and satisfies the condition:
$$
\frac{\log|f(x_0+iy)|}{1+y^2}\in L^1(\mathbb{R}),
$$
for some fixed $x_0$ satisfying $x_0> {\rm Re}\lambda$, for any $\lambda\in\sigma(\mathcal{A})$.
This means that $\det\Delta(\lambda)$ belongs to the class $C$ (see e.g. \cite[Lecture 16]{Levin_1996} for more details).
It is known that indicator function of $f\in C$ if of the form
$$
h_f(\theta)=\lim_{r\rightarrow\infty}\frac{\log|f(r{\rm e}^{i\theta})|}{r}
=
\left\{
\begin{array}{ll}
\alpha_-(f) |\cos\theta|, & \frac{\pi}{2}\le \theta \le \frac{3\pi}{2},\\
\\
\alpha_+(f) \cos\theta, & \frac{3\pi}{2}\le \theta \le \frac{5\pi}{2},
\end{array}
\right.
$$
where $\alpha_-(f)$, $\alpha_+(f)$ are some constants and the limit exists almost for all $\theta$.

In \cite{Verduyn_Yakubovich_1997} a criterion of completeness of generalized eigenvectors is established (in more general settings).
\begin{theorem}{\cite[Theorem~4.2]{Verduyn_Yakubovich_1997}}\label{thm_ly}
	The generalized eigenvectors of $\mathcal{A}$ are complete in $M_2$, i.e. $\overline{V(\mathcal{A})} = M_2$
	if and only if 
	$$\alpha_-(\det\Delta) = n.$$
\end{theorem}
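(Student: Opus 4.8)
The plan is to turn the completeness assertion into a statement about a single entire function attached to the resolvent of $\mathcal{A}$, and then to separate the two implications by a growth analysis governed by the indicator of $\det\Delta$. Completeness $\overline{V(\mathcal{A})}=M_2$ fails exactly when there is a nonzero $g\in M_2$ with $\langle v,g\rangle_{M_2}=0$ for every generalized eigenvector $v$, i.e. $g\perp V_\lambda(\mathcal{A})$ for all $\lambda\in\sigma(\mathcal{A})$. Fixing such a candidate $g$ and an arbitrary $x\in M_2$, I would study the scalar function $G(\lambda)=\langle(\lambda I-\mathcal{A})^{-1}x,\,g\rangle_{M_2}$. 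It is meromorphic on $\mathbb{C}$ with poles only at the eigenvalues, and the principal part at each $\lambda_k$ is a finite combination of pairings of $g$ with generalized eigenvectors at $\lambda_k$; all of these vanish by the choice of $g$, so $G$ extends to an \emph{entire} function.

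The next step is to make the growth of $G$ explicit from the resolvent formula. For the neutral system the resolvent has the schematic form $(\lambda I-\mathcal{A})^{-1}=\mathrm{adj}\,\Delta_\mathcal{A}(\lambda)\,/\,\det\Delta_\mathcal{A}(\lambda)$ acting on the boundary and integral data, so $G$ is a ratio whose denominator is $\det\Delta_\mathcal{A}(\lambda)$ and whose numerator is assembled from the $(n-1)\times(n-1)$ cofactors of $\Delta_\mathcal{A}$. Hence in the leftward direction the numerator has exponential type at most $n-1$, whereas $h_{\det\Delta}(\theta)=\alpha_-(\det\Delta)|\cos\theta|$ for $\frac{\pi}{2}\le\theta\le\frac{3\pi}{2}$. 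On the imaginary axis both numerator and denominator have zero exponential type, so $G$ is polynomially bounded there; in the right half-plane the spectrum is bounded above and $(\lambda I-\mathcal{A})^{-1}$ decays like $\lambda^{-1}$, so $G$ is bounded as $|\lambda|\to\infty$. The decisive region is the left half-plane.

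For sufficiency ($\alpha_-=n\Rightarrow$ completeness) the top exponential term $\lambda^n e^{-n\lambda}\det A_{-1}$ of $\det\Delta_\mathcal{A}$, coming from the neutral term $\lambda e^{-\lambda}A_{-1}$, is then nonzero, so $\det\Delta_\mathcal{A}$ attains the maximal leftward type $n$. Since the numerator has leftward type at most $n-1$, the ratio satisfies $|G(\lambda)|\le C|\lambda|^{c}e^{-|\mathrm{Re}\,\lambda|}$ as $\mathrm{Re}\,\lambda\to-\infty$, so $G$ decays to $0$ in the left half-plane. Together with polynomial boundedness on the imaginary axis and boundedness in the right half-plane, a Phragm\'en--Lindel\"of argument gives polynomial boundedness of $G$ on all of $\mathbb{C}$, whence $G$ is a polynomial; but a polynomial decaying to the left must vanish, so $G\equiv 0$. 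As $x$ was arbitrary, $(\lambda I-\mathcal{A})^{-1}x\perp g$ for every $\lambda$, and inspecting $\lambda(\lambda I-\mathcal{A})^{-1}x\to x$ as $\lambda\to+\infty$ forces $g=0$, i.e. the generalized eigenvectors are complete.

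For necessity I would argue contrapositively: if $\alpha_-(\det\Delta)<n$, then I would \emph{construct} a nonzero $g\perp V(\mathcal{A})$. Translating orthogonality to the eigenvectors $\psi_\lambda$ of \eqref{eq:psi} through a Paley--Wiener transform, the condition $g\perp V(\mathcal{A})$ becomes the requirement that an associated function of the Paley--Wiener class vanish on $\sigma(\mathcal{A})$ with the multiplicities $m_\lambda$; the counting density of the zeros $\sigma(\mathcal{A})$ is tied to the leftward type $\alpha_-$ of $\det\Delta$, and when $\alpha_-<n$ this density lies strictly below that of the ambient Paley--Wiener space, so a nonzero function with these zeros exists and transfers back to a nonzero $g$ annihilating every generalized eigenvector. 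I expect this construction, together with the sharp bookkeeping of exponential types, to be the main obstacle: one must control the leftward type of the resolvent numerator uniformly and verify that the density/type comparison separating completeness from incompleteness is governed precisely by the threshold $\alpha_-=n$, so that the two implications meet cleanly.
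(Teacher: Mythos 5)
First, a point of order: the paper does not prove this statement at all --- it is quoted verbatim from Verduyn~Lunel and Yakubovich \cite{Verduyn_Yakubovich_1997}, so your attempt can only be measured against the literature proof. Your overall strategy (an annihilator $g\perp V(\mathcal{A})$ makes $G(\lambda)=\langle(\lambda I-\mathcal{A})^{-1}x,g\rangle$ entire, then kill $G$ by growth estimates) is indeed the classical opening move. But two of your concrete steps fail. (i) Your claim that $\alpha_-(\det\Delta)=n$ forces the top coefficient $\lambda^n e^{-n\lambda}\det A_{-1}$ to be nonzero is false: retarded systems ($A_{-1}=0$) with $\det A_1\neq 0$ have $\alpha_-=n$ (this is exactly Corollary~\ref{col_02a} of the paper, and the paper's main theorem applies the completeness criterion precisely in the case $A_{-1}=0$). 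The leftward type $n$ can be realized by the delayed terms in $L$, not only by the neutral term, so you cannot read the hypothesis off $\det A_{-1}$. (ii) More seriously, your type bookkeeping for the numerator of $G$ is wrong. Writing the resolvent explicitly, $G$ has the form
$$
G(\lambda)=\frac{\bigl\langle \mathrm{adj}\,\Delta_{\mathcal A}(\lambda)\,q_x(\lambda),\; p_g(\lambda)\bigr\rangle}{\det\Delta_{\mathcal A}(\lambda)}+R_{x,g}(\lambda),
$$
where $q_x(\lambda)$ and $p_g(\lambda)$ involve integrals such as $\int_{-1}^0 e^{\lambda\theta}g_1(\theta)\,{\rm d}\theta$ coming from the $M_2$-data of $x$ and $g$, and $R_{x,g}$ is an entire remainder. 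Each of these data integrals contributes leftward exponential type up to $1$, so the numerator has leftward type up to $n+1$, not $n-1$. Consequently, even when $\alpha_-(\det\Delta)=n$, the ratio can a priori have positive leftward type, it need not decay in the left half-plane, and your Phragm\'en--Lindel\"of/Liouville conclusion $G\equiv 0$ does not follow.

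The missing idea is precisely what the cited proof (and the closely related small-solutions theory of Henry and Verduyn~Lunel) supplies: instead of forcing $G$ to vanish identically by a direct growth comparison, one uses the Paley--Wiener theorem to convert ``$G$ entire, of leftward exponential type $\tau$, and $L^2$ on vertical lines'' into a support statement for the inverse Laplace transform, namely $\langle e^{\mathcal{A}t}x,g\rangle=0$ for $t>\tau$, i.e.\ $e^{\mathcal{A}^*t}g$ is a \emph{small solution} of the transposed system. The type deficit $n-\alpha_-(\det\Delta)$ then controls the vanishing time, and a separate structural argument (injectivity of the data-to-trajectory map when the deficit is zero) yields $g=0$; when $\alpha_-<n$, the same transform picture is run backwards to construct nontrivial small solutions, hence nonzero annihilators. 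This second layer is not optional: without it the equivalence cannot be closed, because the direct estimate genuinely fails at the point indicated above. A further technical point you pass over is that the indicator $\alpha_-$ is only a $\limsup$ along rays, so bounding $1/\det\Delta$ requires lower bounds for Cartwright-class functions (completely regular growth, contours avoiding the zero set), not merely the value of the indicator.
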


Let us consider the special case of operator $L$:
\begin{equation}\label{eq_m_08a}
L f = A_1f(-1)+ \int^0_{-1} A_2(\theta)f'(\theta)\:{\rm d}\theta 
+ \int^0_{-1} A_3(\theta)f(\theta) \:{\rm d}\theta,
\end{equation}
assuming that 
\begin{equation}\label{eq_m_08b}
{\rm supp}\: A_i(\theta)\subset [\alpha, 0], \quad \alpha > -1, i=2,3.
\end{equation}
For this case Theorem~\ref{thm_ly} may be reformulates as

\begin{corollary}\label{col_02a}
	The generalized eigenvectors of the operator $\mathcal{A}$ of the system~(\ref{eq:intro_01}) 
	with the operator $L$ defined by~(\ref{eq_m_08a}) 
	are complete if and only if 
	the matrix pencil $A_1 + \lambda A_{-1}$ is non-singular, i.e. there exists such $\lambda_0\in\mathbb{C}^n$ 
	that 
	$$\det(A_1 + \lambda_0 A_{-1})\not = 0.$$
\end{corollary}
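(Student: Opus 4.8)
The plan is to reduce the statement to Theorem~\ref{thm_ly} by computing the indicator constant $\alpha_-(\det\Delta)$ and showing that it equals $n$ precisely when the pencil $A_1+\lambda A_{-1}$ is non-singular. First I would write out the characteristic matrix associated with the operator $L$ of the form~(\ref{eq_m_08a}): the point term $A_1f(-1)$ contributes $e^{-\lambda}A_1$, the neutral term contributes $\lambda e^{-\lambda}A_{-1}$, and the distributed part contributes the entire matrix function $N(\lambda)=\int_\alpha^0 e^{\lambda s}[\lambda A_2(s)+A_3(s)]\,{\rm d}s$, so that
$$
\Delta(\lambda)=-\lambda I + e^{-\lambda}(\lambda A_{-1}+A_1) + N(\lambda).
$$
The role of the support hypothesis~(\ref{eq_m_08b}) is to guarantee that every exponential $e^{\lambda s}$ occurring in $N(\lambda)$ has $s\in[\alpha,0]$ with $\alpha>-1$, so that $N$ carries only ``delays'' strictly smaller than the delay $1$ carried by the point terms.

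Next I would expand $\det\Delta(\lambda)$ as a finite sum of products over permutations and sort the resulting terms by their total exponential delay. Each row offers three options: the diagonal entry of $-\lambda I$ (delay $0$), an entry of $e^{-\lambda}(\lambda A_{-1}+A_1)$ (delay $1$), or an entry of $N$ (delay in $[0,-\alpha]$). Since each row contributes delay at most $1$, the only way to reach total delay $n$ is to select the point term $e^{-\lambda}(\lambda A_{-1}+A_1)$ in every row, which produces exactly the contribution $e^{-n\lambda}\det(\lambda A_{-1}+A_1)$. I would then observe that any product using at least one entry of $-\lambda I$ or of $N$ has total delay at most $n-1-\alpha<n$; collecting these into a remainder $R(\lambda)$ gives
$$
\det\Delta(\lambda)=e^{-n\lambda}\det(\lambda A_{-1}+A_1)+R(\lambda),\qquad \alpha_-(R)\le n-1-\alpha<n.
$$

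The indicator constant is then read off along the negative real ray $\theta=\pi$, where the form of $h_f$ recalled above yields $h_{\det\Delta}(\pi)=\alpha_-(\det\Delta)$. Evaluating at $\lambda=-r$, $r\to+\infty$, the leading contribution has modulus $e^{nr}\,|\det(-rA_{-1}+A_1)|$. If the pencil is non-singular, $\det(A_1+\lambda A_{-1})$ is a non-zero polynomial, hence $|\det(-rA_{-1}+A_1)|$ is eventually nonzero and grows at most polynomially, so $r^{-1}\log|\det\Delta(-r)|\to n$ and $\alpha_-(\det\Delta)=n$. If the pencil is singular, this leading term vanishes identically, only $R$ survives, and $\alpha_-(\det\Delta)\le n-1-\alpha<n$. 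Applying Theorem~\ref{thm_ly}, namely completeness $\iff\alpha_-(\det\Delta)=n$, closes the equivalence in both directions.

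The main technical obstacle I anticipate is the growth estimate for the distributed part, i.e. verifying that each determinant term involving an $N$-entry is dominated by $e^{(n-1-\alpha)r}$ rather than $e^{nr}$ as ${\rm Re}\,\lambda\to-\infty$. This is where the strict inequality $\alpha>-1$ is indispensable: it is precisely what separates the distributed delays (bounded by $-\alpha<1$) from the point delay at $-1$, and it prevents the integral terms from contaminating the coefficient of $e^{-n\lambda}$. The estimate itself is routine once a term with $j\ge 1$ factors from $N$ is written as $e^{-(n-j)\lambda}\int\!\cdots\!\int e^{\lambda(s_1+\cdots+s_j)}[\cdots]$, whose effective delay $(n-j)-(s_1+\cdots+s_j)$ is maximized at $s_i=\alpha$, giving the bound $n-j(1+\alpha)\le n-1-\alpha$ and hence the required decay relative to the dominant exponential.
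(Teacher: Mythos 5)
Your proposal is correct and follows essentially the same route as the paper: the paper's proof is exactly the decomposition $\det\Delta(\lambda)=f_1(\lambda)+f_2(\lambda)$ with $f_1(\lambda)=\det(A_1+\lambda A_{-1})\,{\rm e}^{-n\lambda}$, the observation that the support hypothesis~(\ref{eq_m_08b}) forces $\alpha_-(f_2)<n$ while $\alpha_-({\rm e}^{-n\lambda})=n$, and an appeal to Theorem~\ref{thm_ly}. You have simply filled in the details the paper leaves implicit (the permutation expansion isolating the ${\rm e}^{-n\lambda}$ term and the quantitative bound $\alpha_-(R)\le n-1-\alpha$), which is a faithful elaboration rather than a different argument.
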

\begin{proof}
	We rewrite
	$$
	\det\Delta(\lambda) = f_1(\lambda) + f_2(\lambda), \quad f_1(\lambda) = \det(A_1 + \lambda A_{-1}) {\rm e}^{-nh}.
	$$
	Due to the assumption~(\ref{eq_m_08b}): $\alpha_-(f_2) < n$ and since $\alpha_-({\rm e}^{-nh}) = n$ we can conclude
	that $\alpha_-(\det\Delta) = n$ if and only if  $\det(A_1 + \lambda A_{-1})\not\equiv 0$.
\end{proof}

It is worth to mention that this result may be obtained using the technique similar to \cite[Theorem~2]{Bartosiewicz_1980}.

\begin{definition}\label{def:completability}
	Control system~(\ref{eq:intro_01}) is said to be completable if there exists a feedback 
	$$
	u(t)= \mathcal{P} x(t) = p_{-1} \dot{z}(t-1) + P z_t,\quad p_{-1}\in\mathbb{R}^{n}, P\in \mathfrak{B}(W^{1,2}((-1,0), \mathbb{R}^n), \mathbb{R}),
	$$
	such that the operator $\mathcal{A}+\mathcal{B}\mathcal{P}$ of the closed-loop system possesses complete
	set of generalized eigenvectors.
\end{definition}

From Definition~\ref{def:completability} and Corollary~\ref{col_02a} we obtain:
\begin{corollary}\label{col:completability}
	The system~(\ref{eq:intro_01}) 
	with the operator $L$ defined by~(\ref{eq_m_08a})  is completable if and only if 
	there exists a feedback of the form
	$$
	u(t)= p_{-1} \dot{z}(t-1) + p_1 z(t-1),\quad p_{-1}, p_1\in\mathbb{R}^{n},
	$$
	such that the matrix pencil 
    $$
	(A_1 + b p_1) + \lambda (A_{-1} + b p_{-1})
	$$
	is non-singular.
\end{corollary}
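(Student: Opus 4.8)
The plan is to characterize completability through the characteristic function of the closed-loop operator and to exploit that the feedback enters as a rank-one perturbation. Substituting $u(t)=p_{-1}\dot z(t-1)+Pz_t$ into~(\ref{eq:intro_01}) yields again a system of the same form, with neutral matrix $A_{-1}+bp_{-1}$ and functional $L+bP$; by Theorem~\ref{thm_ly} its generalized eigenvectors are complete iff $\alpha_-(\det\widetilde\Delta)=n$, where $\widetilde\Delta$ is the closed-loop characteristic matrix. Writing $q(\lambda)^{T}x=P\bigl(\theta\mapsto{\rm e}^{\lambda\theta}x\bigr)$, $x\in\mathbb{R}^{n}$, for the row-vector symbol of $P$, and letting $W(\lambda)$ denote the characteristic matrix~(\ref{eq:Delta}) of the system with neutral part $A_{-1}+bp_{-1}$ and functional $L$ (so that the integrals carrying $A_2,A_3$, by the support assumption~(\ref{eq_m_08b}), have left-hand exponential type strictly below~$1$), the feedback contribution separates as a rank-one term: $\widetilde\Delta(\lambda)=W(\lambda)-b\,q(\lambda)^{T}$.

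Since $b\,q^{T}$ has rank one, the matrix determinant lemma gives $\det\widetilde\Delta=\det W-q^{T}\,\mathrm{adj}\,W\,b$, and I would run the growth estimate from the proof of Corollary~\ref{col_02a} on each factor. Setting $N(\lambda):=A_1+\lambda(A_{-1}+bp_{-1})$, one has $\det W\sim(-1)^{n}{\rm e}^{-n\lambda}\det N$ and $\mathrm{adj}\,W\sim(-1)^{n-1}{\rm e}^{-(n-1)\lambda}\mathrm{adj}\,N$ in the left half-plane, up to terms of smaller type. The only new ingredient is the symbol $q$: its type-one coefficient is the row vector $\hat p_1:=\lim_{x\to-\infty}{\rm e}^{x}q(x)^{T}$, which by dominated convergence equals the coefficient of the point evaluation at $-1$ inside $P$, while every integral part of $P$ contributes merely an $o(1)$ factor. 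Hence the type-$n$ part of $\det\widetilde\Delta$ is ${\rm e}^{-n\lambda}C(\lambda)$ with
$$C(\lambda)=(-1)^{n}\Bigl[\det N(\lambda)+\bigl(\hat p_1+o(1)\bigr)\mathrm{adj}\,N(\lambda)\,b\Bigr],$$
so that, modulo the vanishing remainder $o(1)\cdot\mathrm{adj}\,N\,b$, a second application of the determinant lemma gives $C(\lambda)=(-1)^{n}\det\bigl(A_1+b\hat p_1+\lambda(A_{-1}+bp_{-1})\bigr)$.

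From this identity both directions follow. For sufficiency I would simply take the point feedback $u(t)=p_{-1}\dot z(t-1)+p_1z(t-1)$, for which $L+bP$ is again of the form~(\ref{eq_m_08a}) with point matrix $A_1+bp_1$ and unchanged support-restricted integrals, so Corollary~\ref{col_02a} applies verbatim and non-singularity of the pencil yields completeness, hence completability. For necessity, suppose some bounded feedback $(p_{-1},P)$ makes the closed-loop eigenvectors complete, so $\alpha_-(\det\widetilde\Delta)=n$ and thus $C\not\equiv0$. If $\det\bigl(A_1+b\hat p_1+\lambda(A_{-1}+bp_{-1})\bigr)\not\equiv0$, the point feedback $p_1=\hat p_1$ already renders the pencil non-singular; otherwise the only way to keep $C\not\equiv0$ is $\mathrm{adj}\,N\,b\not\equiv0$, and then $\det\bigl(A_1+bp_1+\lambda(A_{-1}+bp_{-1})\bigr)=(p_1-\hat p_1)\,\mathrm{adj}\,N\,b\not\equiv0$ for a suitable row $p_1$, keeping the same $p_{-1}$. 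In either case a point feedback makes the pencil non-singular.

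The step I expect to be the genuine obstacle is the growth analysis isolating the type-$n$ part, precisely the justification that the integral (non-point) constituents of a completely general bounded $P$ contribute only an $o(1)$ amount to the leading exponential. A priori such integrals can have left-hand exponential type equal to $1$, so naively they might push $\alpha_-(\det\widetilde\Delta)$ up to $n$ independently of the pencil; what rescues the argument is the rank-one form $b\,q^{T}$, which through the determinant lemma confines the entire feedback effect to the single scalar $q^{T}\,\mathrm{adj}\,W\,b$ and thereby ties it to the very minor $\mathrm{adj}\,N\,b$ that also governs point feedback. Making the $o(1)$ control and the class-$C$ indicator bookkeeping precise is the technical heart of the proof.
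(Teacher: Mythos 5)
Your strategy is sound, and in fact it addresses something the paper does not: the paper gives no proof at all, presenting the corollary as immediate from Definition~\ref{def:completability} and Corollary~\ref{col_02a}, which really settles only the sufficiency direction (a point feedback preserves the form (\ref{eq_m_08a})--(\ref{eq_m_08b})); the necessity direction needs exactly the kind of rank-one analysis you set up, because a general bounded $P$ destroys the support condition (\ref{eq_m_08b}). However, your execution fails at the step you yourself call the technical heart. The limit $\hat p_1=\lim_{x\to-\infty}{\rm e}^{x}q(x)^{T}$ need not exist, and dominated convergence does not apply. Writing $Pf=\int_{-1}^0\left[F_2(\theta)f'(\theta)+F_3(\theta)f(\theta)\right]{\rm d}\theta$ with $F_2,F_3\in L^2(-1,0)$ (this is the general bounded functional on $W^{1,2}$), the derivative part contributes to ${\rm e}^{x}q(x)^{T}$ the term $\int_{-1}^0 x\,{\rm e}^{x(1+\theta)}F_2(\theta)\,{\rm d}\theta$, whose kernel $|x|\,{\rm e}^{x(1+\theta)}$ is an approximate identity at $\theta=-1$ of total mass $1-{\rm e}^{x}\to 1$, not a vanishing one. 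For $F_2(\theta)=(1+\theta)^{-1/4}\in L^2$ this term grows like $|x|^{1/4}$; even for continuous $F_2$ it converges to $-F_2(-1)\neq 0$ in general, contradicting your claim that every integral part of $P$ contributes only $o(1)$. Consequently $C(\lambda)$ is not a well-defined function, and the case analysis built on $\hat p_1$ (``if $\det(A_1+b\hat p_1+\lambda(A_{-1}+bp_{-1}))\not\equiv 0$ \dots'') collapses as written.

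The gap is repairable inside your own framework, precisely because no coefficient extraction is needed. Keeping the full symbol $q$ and setting $N(\lambda)=A_1+\lambda(A_{-1}+bp_{-1})$, your type bookkeeping gives
$$
\det\widetilde\Delta(\lambda)={\rm e}^{-n\lambda}\det N(\lambda)
+{\rm e}^{-(n-1)\lambda}\,q(\lambda)^{T}\,\mathrm{adj}\,N(\lambda)\,b+\rho(\lambda),
\qquad \alpha_-(\rho)\le n-1+|\alpha|<n,
$$
since every cross term picks up at least one factor of left type at most $|\alpha|<1$ (the integrals of $L$, or the $-\lambda I$ block), while $q$ has left type at most $1$. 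Now argue by contraposition: if every constant feedback fails, i.e. $\det\left(N(\lambda)+bp_1\right)=\det N+p_1\,\mathrm{adj}\,N\,b\equiv 0$ for all rows $p_1$, then taking $p_1=0$ and $p_1=e_i^{T}$ gives both $\det N\equiv 0$ and $\mathrm{adj}\,N\,b\equiv 0$, so the leading part above vanishes identically for \emph{every} admissible $q$, hence $\alpha_-(\det\widetilde\Delta)<n$ and the closed loop is never complete by Theorem~\ref{thm_ly}. Equivalently, completeness under some $(p_{-1},P)$ forces $\det N\not\equiv 0$ or $\mathrm{adj}\,N\,b\not\equiv 0$, and in either case a constant $p_1$ (namely $p_1=0$, or $p_1=e_i^{T}$ with $(\mathrm{adj}\,N\,b)_i\not\equiv 0$) makes the pencil non-singular with the same $p_{-1}$. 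This reaches exactly the dichotomy you aimed for, without the nonexistent $\hat p_1$.
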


\section{The main result}\label{sec:res}

Consider the special case of the system~(\ref{eq:intro_01}) when $A_{-1}=0$ and the operator $L$ is defined by~(\ref{eq_m_08a}), i.e.:
\begin{equation}\label{eq_bs_01_01zz}
\dot{z}(t)=A_1 z(t-1) + \int_{-1}^0 \left[A_2(\theta) \dot{z}(t+\theta) + A_3(\theta) z(t+\theta)\right] {\rm d}t + b u(t),
\end{equation}
with
\begin{equation}\label{eq_m_08bzz}
{\rm supp}\: A_i(\theta)\subset [\alpha, 0], \quad \alpha > -1, \quad i=2,3.
\end{equation}
We assume that the system (\ref{eq_bs_01_01zz}) is completable, what means, due to Corollary \ref{col:completability}, that the pair $(A_1, b)$ is controllable:
\begin{equation}\label{eq_bs_01_01szz}
{\rm rank}(- \lambda I + A_1; b) = n, \quad \lambda\in \mathbb{C}.
\end{equation}

\begin{theorem}
The system (\ref{eq_bs_01_01zz}) satisfying the rank condition (\ref{eq_bs_01_01szz}) is exactly null-controllable from any initial state $x_0\in M_2$ if and only if it is spectrally controllable.
\end{theorem}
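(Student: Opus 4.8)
The necessity direction---exact null controllability implies spectral controllability---is the standard one: if (\ref{eq:spec_contr}) fails at some $\lambda_0$, then $\langle b,y_{\lambda_0}\rangle=0$ for a nonzero $y_{\lambda_0}\in\mathrm{Ker}\,\Delta^*(\lambda_0)$, and the functional $\psi_{\lambda_0}$ obstructs reaching a generic $x_0$ through $\mathcal R_T$; I would dispatch this by the moment computation preceding (\ref{eq_m_07}). The substance of the theorem is the converse, and the plan is to carry out the moment-problem program outlined in the paper.

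First I would remove the completeness obstruction by a feedback reduction. Since the pair $(A_1,b)$ is controllable by (\ref{eq_bs_01_01szz}) and the control is scalar, pole placement furnishes $p_1\in\mathbb R^n$ such that $A_1+bp_1$ is nonsingular; the feedback $u(t)=p_1z(t-1)+v(t)$ then keeps the system retarded (no $\dot z$-feedback is needed, so $A_{-1}$ stays zero) and, by Corollary~\ref{col_02a}, makes the generalized eigenvectors of the closed-loop operator complete in $M_2$. Both exact null controllability from $M_2$ and the rank condition (\ref{eq:spec_contr}) are invariant under this bounded feedback---the columns of $\Delta$ are altered only by multiples of $b$, so the column space of $(\Delta;b)$ is unchanged, and null-controllable trajectories of the two systems coincide under $u\leftrightarrow v$. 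Hence I may assume from the outset that $\det A_1\neq0$ and that $\{\psi_\lambda\}_{\lambda\in\sigma(\mathcal A)}$ is complete in $M_2$. By the Remark following (\ref{eq_m_07aaa}), exact null controllability of every $x_0\in M_2$ is then \emph{equivalent} to solvability in $L^2[0,T]$ of the moment problem (\ref{eq_m_07}) with data (\ref{eq_m_07aaa}), and it remains only to exhibit a $T$ for which this problem is always solvable.

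Next I would analyze the two families governing (\ref{eq_m_07}). On the spectral side, with $\det A_1\neq0$ the zeros of $\det\Delta(\lambda)$ split, for large $|\lambda|$, into $n$ chains whose imaginary parts are spaced asymptotically by $2\pi$ and whose real parts tend to $-\infty$ like $-\log|\mathrm{Im}\,\lambda|$; the support hypothesis $\alpha>-1$ guarantees that the $A_2,A_3$ terms are of strictly smaller exponential type and therefore perturb neither this asymptotic density nor the exponential type, which fixes the critical length and lets me take $T$ of the order of $n$. On the data side, because $x_0\in M_2$ has finite energy and the factor $\mathrm e^{\lambda T}$ in (\ref{eq_m_07aaa}) decays like $\mathrm e^{-|\mathrm{Re}\,\lambda|\,T}$ as $\mathrm{Re}\,\lambda\to-\infty$, the moments $s_\lambda$ decay rapidly along the spectrum. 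For the exponentials $\{\mathrm e^{\lambda\tau}\}_{\lambda\in\sigma(\mathcal A)}$---and, at the finitely many multiple eigenvalues, for the families $\{\tau^j\mathrm e^{\lambda\tau}\}$ attached to the chains---I would construct, following \cite{Avdonin_Ivanov_1995,Levin_1996}, an explicit biorthogonal system $\{v_\lambda\}\subset L^2[0,T]$ from the generating function $\det\Delta$ via the Cauchy kernels $\det\Delta(\lambda)\big/\big((\lambda-\lambda_k)\,(\det\Delta)'(\lambda_k)\big)$ and their Paley--Wiener inversion on $[0,T]$, together with norm bounds of the form $\|v_\lambda\|_{L^2[0,T]}\le C(1+|\lambda|)^{p}\,\mathrm e^{-\mathrm{Re}\,\lambda\,d}$ with $d<T$.

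The candidate solution is then $v=\sum_{\lambda\in\sigma(\mathcal A)}s_\lambda v_\lambda$, with the finitely many low-frequency moments matched separately by finite-dimensional linear algebra (spectral controllability guarantees these are attainable). \textbf{The main obstacle}, and the heart of the argument, is the $L^2$-convergence of this series: it requires that the rapid decay $|s_\lambda|\lesssim \mathrm e^{-|\mathrm{Re}\,\lambda|\,T}$ of the moments dominate the growth $\|v_\lambda\|\lesssim \mathrm e^{|\mathrm{Re}\,\lambda|\,d}$ of the biorthogonal norms, so that $\sum_\lambda|s_\lambda|\,\|v_\lambda\|<\infty$. This is exactly where the choice $T>d$---made possible by the support restriction $\alpha>-1$ keeping the exponential type below the nominal value---is used; once convergence is secured, $v$ solves (\ref{eq_m_07}), and the completeness established in the first step converts this into exact null controllability of the original system from all of $M_2$.
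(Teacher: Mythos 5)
Your overall architecture coincides with the paper's: use a feedback reduction to make the closed-loop generalized eigenvectors complete, so that (by the Remark after (\ref{eq_m_07aaa})) exact null controllability from $M_2$ becomes equivalent to solvability of the moment problem (\ref{eq_m_07}); then solve that problem by a biorthogonal series whose convergence is a race between the decay of the moments and the growth of the biorthogonal norms. Where you genuinely diverge is in the construction of the biorthogonal family. The paper (Steps 1--5) first normalizes $A_1$ to a diagonal matrix with \emph{distinct} nonzero entries $a_j$ and $b=(1,\ldots,1)^*$, localizes the true zeros $\lambda_k^j$ near the model zeros of $-\lambda+a_j{\rm e}^{-\lambda}$ by a Rouch\'e-type argument, builds for each single chain completely explicit biorthogonal functions supported on $[0,1]\cup[T-1,T]$ exploiting the relation $\lambda_k{\rm e}^{\lambda_k}=a_j$ (formulas (\ref{eq_mp_07})--(\ref{eq_mp_09})), merges the $n$ chains by an operator-theoretic argument in $\prod_j L^2[0,T]$ (the operator $D$, Banach's inverse mapping theorem, projection of the product biorthogonals), and finally passes to the true spectrum by perturbation. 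You instead take the classical Levin/Avdonin--Ivanov route: Cauchy kernels of the generating function $\det\Delta$ itself. This treats all chains at once, dispenses with the product-space merging, and yields biorthogonal norm bounds whose growth exponent is tied to the exponential type of $\det\Delta$ and hence \emph{independent of $T$}, so that enlarging $T$ transparently buys convergence --- a real structural advantage over the paper's construction, where the exponent in (\ref{eq_mp_12}) itself grows with $T$. The price is that your key estimates are asserted rather than derived: on vertical lines $\det\Delta$ grows like $|\lambda|^n$, so the kernels must be normalized by a polynomial factor before Paley--Wiener applies, and the bound on $\|v_\lambda\|$ requires a lower bound of order $|\lambda_k|^{n}$ on $|(\det\Delta)'(\lambda_k)|$ together with a uniform lower bound on $|\langle b,y_{\lambda_k}\rangle|$ after normalization; both need exactly the spectral localization analysis of the paper's Step 2.

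There is one concrete gap. Your feedback reduction only makes $A_1+bp_1$ nonsingular, and that is not sufficient for the estimates you claim. If $A_1+bp_1$ is nonsingular but has repeated eigenvalues (say $A_1+bp_1=I$), the model characteristic function is $(-\lambda+{\rm e}^{-\lambda})^n$, all of whose zeros have multiplicity $n$; the perturbation coming from $A_2,A_3$ (of strictly smaller type by (\ref{eq_m_08bzz})) splits each of them into a cluster of $n$ zeros whose mutual distances shrink as $|\lambda|\to\infty$. The zero set of $\det\Delta$ is then not uniformly separated, $|(\det\Delta)'(\lambda_k)|$ admits no lower bound of the form $c|\lambda_k|^n$, and your claimed estimate $\|v_\lambda\|\le C(1+|\lambda|)^p{\rm e}^{-\mathrm{Re}\lambda\, d}$ fails; your description of the spectrum as $n$ well-separated chains also breaks down. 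The cure lies inside your own argument and is precisely the paper's Step 1: since $(A_1,b)$ is controllable with scalar input, pole placement lets you make the eigenvalues of $A_1+bp_1$ \emph{distinct} and nonzero, after which the combined zero set is uniformly separated, only finitely many eigenvalues are multiple, and the derivative bound holds for large $|\lambda_k|$. With that correction, and with the two lower bounds above actually proved, your scheme goes through; note finally that the convergence threshold should read ``$T$ exceeding $d+p$ by a margin'' rather than merely $T>d$, which is harmless since the theorem does not prescribe the control time.
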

\begin{proof}
We divide the proof of the proposition into several steps.
	
{\em Step 1.} We fix $n$ distinct nonzero numbers $\{a_i\}$ and apply a change of feedback $u(t) = v(t) + \hat{A}_1 z(t-1)$ and a change of coordinates such that the system takes the form (\ref{eq_bs_01_01zz}) with
$$
	A_1 = {\rm diag}(a_i), \quad b=(1,\ldots, 1)^*.	
$$
Besides we assume, without loss of generality, that the eigenvalues of the corresponding operator $\mathcal{A}$ are simple. Indeed, only a finite number of eigenvalues may be multiple, thus there exists a feedback change of the form 
$$
	u(t)=v(t)+ \int_{-1}^0 \left[\hat{A}_2(\theta) \dot{z}(t+\theta) + \hat{A}_3(\theta) z(t+\theta)\right] {\rm d}t
$$
that makes the spectrum of $\mathcal{A}$ simple.

{\em Step 2.} The characteristic function of the system $\dot{z}(t)=A_1 z(t-1)$
has the form $\det(-\lambda I + {\rm e}^{-\lambda} A_1) = \prod_{j=1}^n (-\lambda + a_j{\rm e}^{-\lambda})$. 
Each of the entire functions $g_j(\lambda) = -\lambda + a_j {\rm e}^{-\lambda}$
possesses infinitely many zeros which we denote as $\{\tilde{\lambda}_k^j\}_{k\in \mathbb{Z}}$ 
assuming that $\lambda^j_0$ is the only real zero of $g_j(\lambda)$ and ${\rm Im} \lambda^j_k < {\rm Im} \lambda^j_s $ as $k<s$. 
We also consider the circles $L_k^j (r_0)$ of fixed radius $r_0= \frac{1}{3}\min \{
|\tilde{\lambda}_{k_1}^{j_1} -\tilde{\lambda}_{k_2}^{j_2} |, (k_1,j_1)\neq (k_2,j_2)\}$ centered
at $\tilde{\lambda}_k^j$.
 
Similarly to \cite[Theorem 2]{Rabah_Sklyar_Rezounenko_2005} it can be shown that there exists $N\in\mathbb{N}$ such that for any $j$, satisfying $|j|\ge N$, the characteristic function $\det \Delta_{\mathcal A}(\lambda)$ of (\ref{eq_bs_01_01zz})
possesses a zero in the circle $L^j_k(r_0)$.

We denote the zeros of the characteristic equation $\det \Delta_{\mathcal A}(\lambda)$ as $\{\lambda_k^j\}$.
The eigenvectors $\{{\psi}_k^j\}$ of the operator $\mathcal{A}^*$ are given by (\ref{eq:psi}).
Due to the spectral controllability condition we have $\left\langle b, y_k^j \right\rangle_{\mathbb{R}^n}\not=0$ and thus we can normalize ${\psi}_k^j$ such that $\left\langle b, y_k^j \right\rangle_{\mathbb{R}^n}=1$.
Then the moment problem (\ref{eq_m_07}) for equation (\ref{eq_bs_01_01zz}) takes the form
\begin{equation}\label{eq_m_08qq}
s_k^j  =  \int_0^T {\rm e}^{{\lambda}_k^j\tau} u(\tau) {\rm d}\tau,
\quad  j=\overline{1,n}, k\in\mathbb{Z},
\end{equation}
where 
$$
s_k^j\equiv {\rm e}^{\lambda_k T}  \left\langle x_0, {\psi}_k^j \right\rangle_{M_2}.
$$

The system $\{{\psi}_k^j\}$ is complete  in $M_2$ due to Corollary~\ref{col_02a}, thus solvability of (\ref{eq_m_08qq}) is equivalent to exact null controllability of initial state $x_0\in M_2$.
Further we show that the moment problem (\ref{eq_m_08qq}) is solvable in class $L^2[0,T]$ for $T>T_0$.

{\em Step 3.} Consider an entire function $g(\lambda) = - \lambda + a {\rm e}^{-\lambda}$, $\lambda\in\mathbb{C}$. We denote its zeros as $\{\lambda_k\}_{k\in \mathbb{Z}}$ 
assuming that $\lambda_0$ is the only real zero of $g(\lambda)$ and ${\rm Im} \lambda_k < {\rm Im} \lambda_s $ as $k<s$, and consider the family of exponentials
\begin{equation}\label{eq_mp_01}
\{{\rm e}^{\lambda_k t}, \quad t\in [0,T],\; \lambda_k: \lambda_k {\rm e}^{\lambda_k}=1\}.
\end{equation}
There exists $T_0>0$ such that the family (\ref{eq_mp_01}) is minimal in $L^2[0, T]$ for any $T>T_0$ (see e.g. \cite{Young_1980}) . 
Thus, there exists a biorthogonal family to (\ref{eq_mp_01}) in $L^2[0, T]$, and we construct it explicitly.

First we define $\widetilde{f}_s(t)={\rm e}^{-\overline{\lambda}_s t} -  {\rm e}^{-\overline{\lambda_{s+1}} t}$,  $s\in\mathbb{Z}$,and 
\begin{equation}\label{eq_mp_08}
\widetilde{f}_s^1(t)
=\left\{ 
\begin{array}{ll}
\widetilde{f}_s(t), & t\in[0,1],\\
0, & t\in(1,T],
\end{array}
\right.
\qquad
\widetilde{f}_s^2(t)
=\left\{ 
\begin{array}{ll}
0, & t\in[0,T-1),\\
\widetilde{f}_s(t-T+1), & t\in[T-1,T].
\end{array}
\right.
\end{equation}
Further we construct in the space $L^2[0, T]$ ($T>1$) the biorthogonal to $\{{\rm e}^{\lambda_k t}\}_{k\in\mathbb{Z}}$ family $\{{f}_s(t)\}_{s\in\mathbb{Z}}$:
\begin{equation}\label{eq_mp_07}
f_s(t)=\beta_s^1 \widetilde{f}_s^1(t) + \beta_s^2 \widetilde{f}_s^2(t), \quad \beta_s^i\in \mathbb{C},
\end{equation}
where $\beta_s^i$ are obtained from the condition $\langle {\rm e}^{\lambda_k t}, f_s(t)\rangle_{L^2[0,T]}  = \delta_{ks}$:
\begin{equation}\label{eq_mp_09}
\beta_s^1 = -\frac{\widetilde{\lambda}_s}{(\widetilde{\lambda}_{s+1} - \widetilde{\lambda}_s)\alpha_s},\qquad
\beta_s^2 = \frac{\widetilde{\lambda}_s \widetilde{\lambda}_{s+1}}{(\widetilde{\lambda}_{s+1} - \widetilde{\lambda}_s)\alpha_s}, 
\end{equation}
where $\widetilde{\lambda}_s = (\lambda_s)^{T-1}$, $\alpha_s = 1+\frac{1}{\lambda_s}$.

We estimate the growth of the constructed family $\{f_s(t)\}$:
$$
\|f_s(t)\|_{L^2[0,T]}\le |\beta_s^1| \|\widetilde{f}_s^1(t)\|_{L^2[0,T]} + |\beta_s^2| \|\widetilde{f}_s^2(t)\|_{L^2[0,T]}
= \left(|\beta_s^1| + |\beta_s^2|\right) \|\widetilde{f}_s(t)\|_{L^2[0,1]}.
$$
Since $|\alpha_s|\sim 1$, $|\lambda_s - \lambda_{s+1}|\sim 1$ and $|\lambda_s|\sim s$ one has
$
|\beta_s^1|\sim s^{T-1}$,and $|\beta_s^2|\sim s^{2(T-1)}$.
The norm of $\widetilde{f}_s(t)$ may be computed explicitly:
$$
\|\widetilde{f}_s(t)\|^2_{L^2[0,1]} =
= \frac{- \left[ (|\lambda_s|^2 - 1) {\rm Re} \lambda_{s+1} + (|\lambda_{s+1}|^2 - 1) {\rm Re} \lambda_s \right] 
	|\lambda_s - \lambda_{s+1}|^2}
{2{\rm Re} \lambda_s {\rm Re} \lambda_{s+1} |\overline{\lambda}_s + \lambda_{s+1}|^2}.
$$
Since $|{\rm Re} \lambda_s|\sim \ln s$ and $|\overline{\lambda}_s + \lambda_{s+1}|\sim 1$ one has
\begin{equation}\label{eq_mp_11}
\|\widetilde{f}_s(t)\|_{L^2[0,1]}\sim  s (\ln s)^{-\frac12}.
\end{equation}
Thus, there exist $C>0$ and $s_0\in\mathbb{N}$ such that for any $s: |s|>s_0$ one has
\begin{equation}\label{eq_mp_12}
\|f_s(t)\|_{L^2[0,T]}\le C s^{1+2(T-1)} (\ln s)^{-\frac12}.
\end{equation}

{\em Step 4.} We consider the family of exponentials corresponding to eigenvalues $\{\tilde{\lambda}_k^j\}$, ${k\in \mathbb{Z}}$, $j=\overline{1,n}$ constructed above:
\begin{equation}\label{eq_mp_13}
\{{\rm e}^{\lambda_k^j t}:\;  t\in [0,T],\; \lambda_k^j {\rm e}^{\lambda_k^j}=a_j, j=\overline{1,n}, k\in\mathbb{Z}\},
\end{equation}
and the corresponding moment problem
\begin{equation}\label{eq_mp_14}
s_k^j = \int_0^T {\rm e}^{\lambda_k^j\tau} u(\tau) {\rm d}\tau,  \quad j=\overline{1,r}, k\in\mathbb{Z}.
\end{equation}
The family (\ref{eq_mp_13}) if minimal in $L^2[0, T]$ for $T>n$ (see e.g. \cite{Young_1980}).
Thus, there exists a biorthogonal family and below we construct and estimate it, what allows to show solvability of the moment problem (\ref{eq_mp_14}) in $L^2[0,T]$.

Let us introduce the Hilbert space
\begin{equation}\label{eq_mp_16}
{E}_1 = \left\{x: x=(x_1,\ldots,x_r), x_j \in {\rm Lin}\{ {\rm e}^{\lambda_k^j t}, t\in [0,T] \} \right\} 
\subset \prod\limits_{j=1}^r L^2[0,T]
\end{equation}
with the norm of space $E\equiv \prod\limits_{j=1}^r L^2[0,T]$: $\|x\|_{\widetilde{E}_1 }=\sum_{j=1}^r \|x_j\|_{L^2[0,T]}$, and two families of functions
\begin{equation}\label{eq_mp_17}
\begin{array}{l}
\varphi_k^j(t) = \left(\underbrace{0, \ldots, 0}_{j-1}, {\rm e}^{\lambda_k^j t},0, \ldots, 0 \right) \in E_1,\\
\psi_k^j(t) = \left(\underbrace{0, \ldots, 0}_{j-1}, f_k^j,0, \ldots, 0 \right) \in \prod\limits_{j=1}^r L^2[0,T],
\end{array}
\end{equation}
where for each fixed $j$, the family $\{f_k^j\}$, constructed by (\ref{eq_mp_07}) is biorthogonal to $\{{\rm e}^{\lambda_k^j t}\}$ in $L^2[0,T]$.
By construction we have
$\left\langle \varphi_{k_1}^{j_1},\; \psi_{k_2}^{j_2} \right\rangle_{E} = \delta_{(k_1, j_1)}^{(k_2, j_2)}$.

Let $\widetilde{D}$ be a linear operator acting from ${E}_1$ to $L^2[0,T]$ as follows: 
\begin{equation}\label{eq_mp_18}
\widetilde{D} x = \sum_{j=1}^r x_j.
\end{equation}
Obviously, the operator $\widetilde{D}$ is bounded:
\begin{equation}\label{eq_mp_19}
\|\widetilde{D} x\|_{L^2[0,T]} \le\sum_{j=1}^r  \|x_j\|_{L^2[0,T]} = \|x\|_{E_1},
\end{equation}
thus, it may be extended onto $\overline{E}_1$, the closure of ${E}_1$, without increasing of the norm, i.e.
there exists the operator $D: \overline{E}_1 \rightarrow L^2[0,T]$ such that
\begin{equation}\label{eq_mp_20}
Dx = \widetilde{D} x, x \in E_1, \quad \mbox{and } \quad \|D\| = \|\widetilde{D}\|.
\end{equation}
Due to construction the range of $D$ is the following subspace of $L^2[0, T]$:
\begin{equation}\label{eq_mp_21}
E_2 = \overline{{\rm Lin}\{ {\rm e}^{\lambda_k^j t}:\;  t\in [0, T]\}} .
\end{equation}
Moreover, since ${\rm e}^{\lambda_k^j t}$ is minimal in $L^2[0, T]$, 
then $D$ is bijective as an operator from $\overline{E}_1$ to $E_2$ 
and due to the Banach theorem on the inverse operator $D^{-1}: E_2\rightarrow \overline{E}_1$ is bounded.

We denote by $\widetilde{\psi}_k^j$ the orthogonal projection of $\psi_k^j$ onto subspace $\overline{E}_1$, 
i.e. $\psi_k^j = \widetilde{\psi}_k^j + d_k^j$, where $d_k^j\in (\overline{E}_1)^\perp$.
The family $\{\widetilde{\psi}_k^j\}$ is biorthogonal to $\{\varphi_{k_1}^{j_1}\}$ in $\overline{E}_1$:
\begin{equation}\label{eq_mp_22}
\left\langle \varphi_{k_1}^{j_1},\; \widetilde{\psi}_{k_2}^{j_2} \right\rangle_{\overline{E}_1} = 
\left\langle \varphi_{k_1}^{j_1},\; \psi_{k_2}^{j_2} \right\rangle_{E} = \delta_{(k_1, j_1)}^{(k_2, j_2)},
\end{equation}
moreover, it is well-known that
$\left\| \widetilde{\psi}_{k}^{j} \right\|_{\overline{E}_1} \le \left\| \psi_{k}^{j} \right\|_{E}$.

The family $\{g_k^j(t)\}=\{(D^{-1})^*\widetilde{\psi}_k^j\}$ is biorthogonal to $\{{\rm e}^{\lambda_{k}^{j} t}\}$ in $E_2\subset L^2[0,T]$:
\begin{equation}\label{eq_mp_23}
\left\langle {\rm e}^{\lambda_{k_1}^{j_1} t},\; (D^{-1})^*\widetilde{\psi}_{k_2}^{j_2} \right\rangle_{E_2} = 
\left\langle D^{-1} {\rm e}^{\lambda_{k_1}^{j_1} t},\; \widetilde{\psi}_{k_2}^{j_2} \right\rangle_{\overline{E}_1} = 
\left\langle \varphi_{k_1}^{j_1},\; \widetilde{\psi}_{k_2}^{j_2} \right\rangle_{\overline{E}_1} = \delta_{(k_1, j_1)}^{(k_2, j_2)}.
\end{equation}

Since $D^{-1}$ is bounded, then the adjoin operator $(D^{-1})^*$ is bounded as well and $\|(D^{-1})^*\| = \|D^{-1}\|$,
and thus, we can estimate
\begin{equation}\label{eq_mp_24}
\left\| g_k^j(t) \right\|_{L^2[0,T]} \le \left\|(D^{-1})^*\right\| \left\| \widetilde{\psi}_{k}^{j} \right\|_{\overline{E}_1} \le
C \left\| \psi_{k}^{j} \right\|_{E} = C \left\| f_{k}^{j} \right\|_{L^2[0,T]}.
\end{equation}

{\em Step 5.}  The sequences $\{{\lambda}_k^j\}$ and $\{\tilde{\lambda}_k^j\}$ are such that $|{\lambda}_k^j - \tilde{\lambda}_k^j|\rightarrow 0$ as $k\rightarrow\infty$ for any $j\in\overline{1,n}$.
Let $T>0$ is such that in $L^2(0,T)$ the system $\{{\rm e}^{\tilde{\lambda}_k^j t}\}$ is minimal
and $\{g_k^j(t)\}$ is the biorthogonal system constructed above.
Then there exists a system $\{h_k^j(t)\}\subset L^2(0,T)$ biorthogonal to $\{{\rm e}^{{\lambda}_k^j t}\}$ such that
$$
\|g_k^j(t) - h_k^j(t)\|_{L^2(0,T)} \rightarrow 0, \quad k\rightarrow \infty.
$$

Due to the estimate on  $\{s_k\}$  the series 
\begin{equation}\label{eq_m_08cc}
u(t)= \sum_k s_k^j \tilde{h}_k^j(t) 
\end{equation}
converges and $u(t)$ solves the problem  (\ref{eq_m_08qq}).
\end{proof}

\begin{corollary}
The system (\ref{eq_bs_01_01zz}) satisfying the rank condition (\ref{eq_bs_01_01szz}) is completely stabilizable if and only if it is exactly null controllable.
\end{corollary}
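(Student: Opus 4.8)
The plan is to derive the corollary by chaining two equivalences through the common spectral controllability condition~(\ref{eq:spec_contr}), so that essentially no new analytic work is required beyond the theorem. First I would recall what the theorem just established: for the system~(\ref{eq_bs_01_01zz}) under the completability hypothesis~(\ref{eq_bs_01_01szz}), exact null controllability from any initial state $x_0\in M_2$ is equivalent to spectral controllability, that is, to condition~(\ref{eq:spec_contr}). It therefore suffices to prove that, for the same class of systems, complete stabilizability is likewise equivalent to~(\ref{eq:spec_contr}).

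For this I would invoke \cite[Theorem~6]{Rabah_Sklyar_Barkhayev_2017}, which characterizes complete stabilizability of~(\ref{eq:intro_01}) as the conjunction of~(\ref{eq:spec_contr}) with the punctured rank condition~(\ref{eq:A_1_contr_sing}). The decisive simplification comes from the retarded assumption $A_{-1}=0$: in this case~(\ref{eq:A_1_contr_sing}) reduces to the requirement that $\mathrm{rank}\begin{pmatrix} -\mu I; & b \end{pmatrix}=n$ for all $\mu\in\mathbb{C}\backslash\{0\}$, and this holds automatically, since $-\mu I$ alone already has full rank $n$ whenever $\mu\neq 0$. Consequently~(\ref{eq:A_1_contr_sing}) is vacuous here, and complete stabilizability collapses to~(\ref{eq:spec_contr}) alone, in agreement with the remark on retarded systems made in Sect.~\ref{sec:intro}.

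Combining the two characterizations gives complete stabilizability $\Leftrightarrow$ condition~(\ref{eq:spec_contr}) $\Leftrightarrow$ exact null controllability, which is precisely the assertion of the corollary. I do not anticipate any genuine obstacle at this stage: every piece of analytic machinery — the localization of the eigenvalues inside the circles $L_k^j(r_0)$, the explicit biorthogonal families, and the growth estimates~(\ref{eq_mp_12}) and~(\ref{eq_mp_24}) ensuring convergence of the steering control~(\ref{eq_m_08cc}) — has already been consumed in establishing the theorem. The only point demanding a line of care is to verify that the standing hypothesis~(\ref{eq_bs_01_01szz}) is exactly the completability condition under which both cited results are valid, so that the two equivalences may be chained without a gap.
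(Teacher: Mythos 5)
Your proposal is correct and coincides with the paper's intended argument: the corollary is stated without a separate proof precisely because it follows by chaining the theorem (exact null controllability $\Leftrightarrow$ spectral controllability under~(\ref{eq_bs_01_01szz})) with the characterization of complete stabilizability from \cite[Theorem~6]{Rabah_Sklyar_Barkhayev_2017}, where for retarded systems ($A_{-1}=0$) the condition~(\ref{eq:A_1_contr_sing}) holds trivially, so complete stabilizability reduces to~(\ref{eq:spec_contr}), exactly as you argue and as the paper itself notes in Sect.~\ref{sec:intro}. Your closing observation is also the right one: the completability hypothesis~(\ref{eq_bs_01_01szz}) is needed only for the theorem's equivalence, while the stabilizability characterization holds unconditionally, so the chain has no gap.
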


\begin{remark}
	The completability condition (\ref{eq_bs_01_01szz}) is not necessary for exact null controllability. 
\end{remark}	
	Indeed, let us consider the system
	$$
	\dot{z}(t) = 
	\left( \begin{array}{ll}
	0 & 0\\ 
	0 & 1
	\end{array} \right) z(t-1)
	+
	\left( \begin{array}{ll}
	0 & 1\\ 
	0 & 0
	\end{array} \right) z(t)
	+
	\left( \begin{array}{l}
	0\\ 
	1
	\end{array} \right) u(t).
	$$
	It is not completable, however for any initial state it is possible to construct a control steering it to zero.

\begin{remark}
	Completeness (completability) does not imply spectral controllability. 
\end{remark}	
The system
	$$
	\dot{z}(t) = 
	\left( \begin{array}{ll}
	1 & 0\\ 
	0 & 1
	\end{array} \right) z(t-1)
	+
	\left( \begin{array}{l}
	0\\ 
	1
	\end{array} \right) u(t)
	$$
is complete, however it is not controllable.

\begin{acknowledgement}
Pavel Barkhayev was supported by the Norwegian Research Council project ''COMAN'' No. 275113.
\end{acknowledgement}

%
%
%

\end{document}